\pdfoutput=1
\documentclass[11pt]{amsart}
\usepackage[utf8]{inputenc}
\usepackage[T1]{fontenc}
\usepackage{lmodern}
\usepackage{microtype}
\input{glyphtounicode}
\usepackage[english]{babel}
\usepackage{amsmath,amssymb,amsthm,mathtools}
\usepackage{booktabs}
\usepackage{xcolor}
\usepackage[most]{tcolorbox}
\usepackage{enumitem}
\usepackage{verbatim}
\usepackage[margin=1in]{geometry}
\usepackage[hidelinks]{hyperref}

\theoremstyle{plain}
\newtheorem{theorem}{Theorem}[section]
\newtheorem{proposition}[theorem]{Proposition}
\newtheorem{lemma}[theorem]{Lemma}
\newtheorem{corollary}[theorem]{Corollary}
\theoremstyle{definition}
\newtheorem{definition}[theorem]{Definition}
\theoremstyle{remark}
\newtheorem{remark}[theorem]{Remark}

\newcommand{\Z}{\mathbb{Z}}
\newcommand{\N}{\mathbb{N}}
\newcommand{\F}{\mathbb{F}}
\newcommand{\leg}[2]{\left(\frac{#1}{#2}\right)}
\newcommand{\cubsym}[2]{\left(\frac{#1}{#2}\right)_{\!3}}

\title[Unconditional certificates and deterministic witnesses for $p=3m(m+1)+1$]{%
Unconditional Primality Certificates for the Hexagonal $3$-smooth Family\\
$p=3m(m+1)+1$:\\
Deterministic Pocklington Witnesses and Arithmetic Filters}
\author{Hassane Bakkaoui}
\address{Independent Researcher}
\email{bakkahassa@hotmail.com}
\date{\today}
\keywords{primality certificate, Pocklington--Lehmer, centred hexagonal primes,
cubic reciprocity, Eisenstein integers, arithmetic filters, deterministic witnesses}
\subjclass[2020]{11A41, 11Y11, 11N13, 11A15}

\begin{document}

\begin{abstract}
We study the parametric subfamily $p=3m(m+1)+1$ with $m=2^{a}3^{b}-1$, $a,b\in\N^{*}$, a
$3$-smooth slice of the centred hexagonal numbers $3m^2+3m+1=(m+1)^3-m^3$, from the point of
view of \emph{unconditional} primality certification via the Pocklington--Lehmer criterion. The
$3$-smoothness of $m+1=2^{a}3^{b}$ yields, for every $(a,b)$, a fully factored divisor
$F=2^{a}3^{b+1}$ of $p-1$ satisfying $F>\sqrt p$ unconditionally, reducing the certificate to two
witnesses, for $q=2$ and $q=3$.

Our main new contribution is a complete, \emph{deterministic} characterisation of the two
canonical witnesses. We prove (Theorem~\ref{thm:w2}) that $w_2=5$ is a valid witness if and
only if $a-b\equiv 1,2\pmod 4$, by quadratic reciprocity; and (Theorem~\ref{thm:w3}) that
$w_3=7$ is a valid witness if and only if $m\not\equiv 2\pmod 7$, by cubic reciprocity in
$\Z[\omega]$ using the explicit Eisenstein factorisation
$p=\bigl((1+m)-m\omega\bigr)\cdot\bigl((1+m)-m\omega^2\bigr)$. These two results turn the heuristic
``$5$ and $7$ always work'' (which is in fact false) into exact congruence conditions, and yield
a deterministic witness-selection rule. Alongside, three elementary arithmetic filters
(mod-$6$, a $(-3)$ quadratic-residue sieve, and a mod-$7$ forbidden-class test) remove $\approx 87\%$ of
candidates at negligible cost. As a demonstration, a multi-core implementation produced four
unconditional certificates on consumer hardware, the largest a prime of $29\,998$ decimal
digits.
\end{abstract}

\maketitle

\begin{tcolorbox}[colback=black!4,colframe=black!55,title=\textbf{Scope and epistemic status}]
All quantitative claims are labelled \emph{rigorous} (unconditional, proved) or
\emph{computational} (verified by exact arithmetic over an explicit finite range). The two
witness theorems and the three filters are rigorous. The $29\,998$-digit prime is a genuine,
independently re-verified certificate, but it is \emph{not} a record by general standards
(general-purpose ECPP reaches comparable sizes, and $N-1$ provable primes of special form reach
millions of digits); we report it only as a demonstration of the framework on commodity
hardware, and as the largest certificate within \emph{this} subfamily. The paper resolves no
classical open problem.
\end{tcolorbox}

\section{Introduction}

Proving unconditionally that a given integer is prime is a central task of computational number
theory. For generic large primes, the most practical route is the Pocklington--Lehmer
criterion \cite{Pocklington,BLS}: exhibit a fully factored divisor $F>\sqrt p$ of $p-1$ and
verify witnesses. The key difficulty is finding such an $F$.

The subfamily
\[
p=3m(m+1)+1,\qquad m=2^{a}3^{b}-1,\qquad a,b\in\N^{*},
\]
is constructed so that $m+1=2^{a}3^{b}$ is $3$-smooth by design, making
$F=2^{a}3^{b+1}$ fully factored and satisfying $F>\sqrt p$ for every $(a,b)$ without any
condition on $p$. Equivalently, $p=3m^2+3m+1=(m+1)^3-m^3$ is a centred hexagonal number, and the
family is the locus where its $N-1$ certificate is unconditional and minimal.

\subsection*{Contributions} This paper makes two kinds of contribution.

\emph{(A) Deterministic Pocklington witnesses (rigorous, new).} In every certificate produced by
the $N-1$ method one must exhibit, for each prime $q\mid F$, a witness $w_q$ with
$w_q^{(p-1)/q}\not\equiv 1$. Here $q\in\{2,3\}$. Earlier computations used the fixed witnesses
$w_2=5,\ w_3=7$ and observed that they always worked; whether this holds for the whole subfamily
was left open. We settle it: $w_2=5$ and $w_3=7$ are \emph{not} universal, but their validity is
governed by exact congruences in $(a,b)$:
\begin{itemize}[leftmargin=1.4em]
\item Theorem~\ref{thm:w2}: $w_2=5$ is valid $\iff a-b\equiv 1,2\pmod 4$ (quadratic reciprocity);
\item Theorem~\ref{thm:w3}: $w_3=7$ is valid $\iff m\not\equiv 2\pmod 7$ (cubic reciprocity).
\end{itemize}
These give a deterministic witness-selection rule (Corollary~\ref{cor:det}), making the search
provably correct rather than reliant on the luck of fixed witnesses.

\emph{(B) Arithmetic filters and a computational demonstration.} Three elementary congruence
theorems (\S\ref{sec:filters}) remove $\approx 87\%$ of candidate pairs $(a,b)$ before any
large-number arithmetic. A multi-core implementation (\S\ref{sec:algo}--\S\ref{sec:results})
produced four unconditional certificates, the largest of $29\,998$ digits, on a consumer laptop.

\subsection*{Position} This note is a follow-up to~\cite{series}, which develops the heuristic
and conditional analytic theory of the full family $p=km(m+1)+\varepsilon+2kq$. The present
paper isolates the structurally distinguished $3$-smooth subfamily ($\varepsilon=+1$, $q=0$,
$m+1=2^a3^b$) on which the Pocklington method, and now also the witness selection, become
unconditional and explicit. This is a short note: its genuinely new content is the deterministic
witness characterisation of \S\ref{sec:witnesses} (Theorems~\ref{thm:w2} and \ref{thm:w3},
Corollary~\ref{cor:det}); the arithmetic filters (\S\ref{sec:filters}) and the certificate
(\S\ref{sec:results}) recall and specialise material from~\cite{series}, and are included for
self-containedness.

\section{The Pocklington subfamily}\label{sec:subfamily}

\begin{definition}\label{def:fam}
For integers $a,b\ge 1$, set $m(a,b)=2^{a}3^{b}-1$ and
$p(a,b)=3m(m+1)+1=3m^2+3m+1$. Its decimal digit count is
$D(a,b)=\lfloor 2a\log_{10}2+(2b+1)\log_{10}3\rfloor+1$.
\end{definition}

\begin{lemma}[Explicit factorisation]\label{lem:F}
For $p$ as in Definition~\ref{def:fam},
\[
p-1=2^{a}3^{b+1}\,m,\qquad F:=2^{a}3^{b+1}>\sqrt p.
\]
\end{lemma}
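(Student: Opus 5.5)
The identity for $p-1$ is pure algebra. From Definition~\ref{def:fam} we have $p-1=3m(m+1)$, and substituting $m+1=2^{a}3^{b}$ gives
\[
p-1=3\cdot 2^{a}3^{b}\,m=2^{a}3^{b+1}\,m .
\]
So $F=2^{a}3^{b+1}$ divides $p-1$ with cofactor $m$. It is fully factored by construction, with prime divisors exactly $2$ and $3$, since $a,b\ge1$.

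For the inequality $F>\sqrt p$, both sides are positive, so it is equivalent to $F^2>p$. I will write everything in terms of $n:=m+1=2^{a}3^{b}$. Then $F=3n$, so $F^2=9n^2$. On the other side,
\[
p=3(n-1)n+1=3n^2-3n+1 .
\]
Hence
\[
F^2-p=6n^2+3n-1,
\]
which is positive for every $n\ge1$. Here $n\ge 6$ anyway. This proves $F>\sqrt p$ unconditionally, and in fact with a large margin: $F^2>3p$, so $F>\sqrt{3p}$.

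I do not expect a real obstacle. The only points needing care are using the exact substitution $m=n-1$, rather than an asymptotic comparison, so that the bound holds for every $(a,b)$ and not just large ones, and noting that $m\ge 5$, so that $p\ge 91>1$ and the square root comparison is legitimate. One might also remark that $\gcd(F,m)=1$, because $m=2^a3^b-1$ is coprime to $6$. This is not needed for the statement, but it is convenient later, when Pocklington's criterion is applied with the witnesses for $q=2$ and $q=3$.
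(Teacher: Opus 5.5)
Your proof is correct and follows essentially the paper's route: the identity $p-1=3m(m+1)=2^{a}3^{b+1}m$ comes from substituting $m+1=2^{a}3^{b}$, and the inequality comes from an exact comparison. The paper bounds $p<3(m+1)^2$, which is exactly your observation $F^2>3p$, and then uses $\sqrt3<3$; your extra remarks ($m\ge5$, $\gcd(F,m)=1$) are accurate but not needed for the lemma.
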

\begin{proof}
Since $m+1=2^{a}3^{b}$, $p-1=3m(m+1)=3m\cdot 2^{a}3^{b}=2^{a}3^{b+1}m$. As
$p=3m^2+3m+1<3(m+1)^2$, we get $\sqrt p<\sqrt3\,(m+1)<3(m+1)=2^{a}3^{b+1}=F$.
\end{proof}

\begin{theorem}[Pocklington--Lehmer certificate \cite{Pocklington,BLS}]\label{thm:pock}
Let $p=p(a,b)$. Then $p$ is prime iff there exist $w_2,w_3$ such that, for $q\in\{2,3\}$,
\[
w_q^{\,p-1}\equiv 1\pmod p,\qquad \gcd\!\bigl(w_q^{(p-1)/q}-1,\,p\bigr)=1.
\]
\end{theorem}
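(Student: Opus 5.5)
We prove the two implications separately. Sufficiency is the Pocklington--Lehmer argument applied with the fully factored part $F=2^{a}3^{b+1}$ from Lemma~\ref{lem:F}. Necessity comes from the cyclicity of $(\Z/p\Z)^{\times}$.

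\emph{Sufficiency.} Suppose $w_2,w_3$ satisfy the stated conditions, and let $r$ be any prime divisor of $p$. The congruence $w_q^{p-1}\equiv 1\pmod p$ gives $\gcd(w_q,p)=1$, so $w_q$ is a unit modulo $r$. Let $e_q$ be the order of $w_q$ modulo $r$. Then $e_q\mid p-1$. Moreover $e_q\nmid (p-1)/q$, because otherwise $r\mid w_q^{(p-1)/q}-1$, which contradicts $\gcd(w_q^{(p-1)/q}-1,p)=1$. Let $q^{v}$ be the exact power of $q$ dividing $p-1$. The two divisibility facts force $q^{v}\mid e_q$. Since $e_q\mid r-1$ by Fermat, we get $q^{v}\mid r-1$ for $q=2,3$. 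As $F\mid p-1$, the prime power $q^{v_q(F)}$ divides $q^{v}$. Hence $F\mid r-1$, so $r>F>\sqrt p$ by Lemma~\ref{lem:F}. Every prime factor of $p$ therefore exceeds $\sqrt p$, so $p$ has exactly one prime factor counted with multiplicity, and $p$ is prime.

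\emph{Necessity.} If $p$ is prime, choose a primitive root $g$ modulo $p$ and set $w_2=w_3=g$. Then $g^{p-1}\equiv 1$, while $g^{(p-1)/q}\not\equiv 1\pmod p$ because $g$ has order exactly $p-1$. Since $p$ is prime, $\gcd(g^{(p-1)/q}-1,p)=1$.

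The main obstacle is bookkeeping in the sufficiency step. The exponent $(p-1)/q$ is taken with respect to $p-1$, not $F$, so one must extract the full $q$-part of $p-1$ from $e_q$ and only then pass down to the $q$-part of $F$. For $q=3$ there is a further point: $3$ may divide $m$, so $v_3(p-1)$ can exceed $b+1$. This does no harm, since we only need $v_3(F)\le v_3(p-1)$.
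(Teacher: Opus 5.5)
Your proof is correct and follows essentially the paper's route: the paper gives no proof of its own and simply invokes the classical Pocklington--Lehmer criterion with the fully factored $F=2^{a}3^{b+1}>\sqrt p$ from Lemma~\ref{lem:F}, and you carry out that standard argument explicitly, using a primitive root for the converse. One small inaccuracy in your closing remark, harmless to the argument: since $b\ge 1$ we have $m\equiv 2\pmod 3$, and $m$ is odd, so $3\nmid m$ and in fact $v_2(p-1)=a$ and $v_3(p-1)=b+1$; thus $F$ is exactly the $\{2,3\}$-part of $p-1$, and the bookkeeping concern you raise does not arise.
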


\begin{corollary}\label{cor:two}
By Lemma~\ref{lem:F}, $F>\sqrt p$ holds unconditionally and the only primes dividing $F$ are
$2,3$; hence the certificate needs at most the two witnesses $w_2,w_3$.
\end{corollary}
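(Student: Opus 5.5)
The corollary is a direct specialisation of Theorem~\ref{thm:pock}, so I would prove it by checking that the hypotheses of the general Pocklington--Lehmer criterion are met here with $F=2^{a}3^{b+1}$. The general form I rely on is the following. Let $F\mid p-1$ with $F>\sqrt p$ and fully factored. Suppose that for every prime $q\mid F$ there is some $w_q$ with
\[
w_q^{p-1}\equiv 1 \pmod p, \qquad \gcd\bigl(w_q^{(p-1)/q}-1,\,p\bigr)=1.
\]
Then $p$ is prime. The converse holds by taking for every $q$ a primitive root modulo $p$.

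\textbf{Step 1: the divisor.} Lemma~\ref{lem:F} gives $p-1=F\,m$ with $F=2^{a}3^{b+1}$. Its proof gives $F>\sqrt p$ for all $a,b\ge1$, with no assumption on $p$. So $F$ is a valid, unconditionally available fully factored part of $p-1$.

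\textbf{Step 2: the primes to test.} The prime divisors of $F$ are exactly $\{2,3\}$, because $a\ge1$ and $b+1\ge2$. Hence the quantifier ``for every prime $q\mid F$'' ranges over just these two values. At most two witnesses $w_2,w_3$ are needed; they may coincide, which is why the statement says ``at most''.

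\textbf{Step 3: the soundness argument.} For completeness I would recall why the witnesses force primality. Take any prime divisor $r$ of $p$. The order of $w_q$ modulo $r$ divides $p-1$, since $w_q^{p-1}\equiv1$. It does not divide $(p-1)/q$, by the gcd condition. So $q^{v_q(p-1)}$ divides $r-1$. Combining over $q\in\{2,3\}$ gives $F\mid r-1$, hence $r>F>\sqrt p$. Since every prime factor of $p$ exceeds $\sqrt p$, $p$ is prime.

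The main obstacle is only bookkeeping in Step~3: the full prime powers dividing $p-1$ may exceed those dividing $F$. This happens when $2\mid m$ or $3\mid m$; in fact $m=2^a3^b-1$ is always odd and never divisible by $3$, so $F$ is the full $\{2,3\}$-part of $p-1$. Either way one only needs $q^{v_q(F)}\mid r-1$, which follows since $v_q(F)\le v_q(p-1)$. With that observed, the corollary follows immediately from Theorem~\ref{thm:pock} and Lemma~\ref{lem:F}.
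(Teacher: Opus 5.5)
Your proposal is correct and takes the paper's route: Lemma~\ref{lem:F} supplies the fully factored divisor $F=2^{a}3^{b+1}>\sqrt p$, whose only prime factors are $2$ and $3$, so the Pocklington--Lehmer criterion (Theorem~\ref{thm:pock}) needs one witness for each of $q=2$ and $q=3$. The only addition is that you re-derive the soundness step ($F\mid r-1$ for every prime $r\mid p$), which the paper cites from \cite{Pocklington,BLS} rather than proving.
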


For $p$ prime, $w_q^{p-1}\equiv1$ is automatic (Fermat), and the second condition is equivalent
to $w_q^{(p-1)/q}\not\equiv1\pmod p$. Since (Theorem~\ref{thm:mod6}) $p\equiv1\pmod6$, both
$(p-1)/2$ and $(p-1)/3$ are integers, and
\begin{equation}\label{eq:wit}
w_2\text{ valid}\iff w_2\text{ is a quadratic non-residue mod }p,\qquad
w_3\text{ valid}\iff w_3\text{ is a cubic non-residue mod }p.
\end{equation}

\section{Three arithmetic filters}\label{sec:filters}

\begin{theorem}[mod $6$]\label{thm:mod6}
For all $a,b\ge1$, $p(a,b)\equiv1\pmod6$.
\end{theorem}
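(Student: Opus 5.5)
The plan is to get the congruence directly from the factorisation in Lemma~\ref{lem:F}, with no case analysis on $a$ or $b$. That lemma gives $p-1=2^{a}3^{b+1}m$. Since $a\ge1$ and $b\ge1$, the factor $2^{a}3^{b+1}$ is divisible by $2\cdot 3^{2}$, hence by $6$. Therefore $6\mid p-1$, that is, $p\equiv1\pmod 6$.

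As an independent check, and as the form most useful for the later witness theorems, I would also give the argument straight from $p=3m(m+1)+1$, valid for every integer $m$. The product $m(m+1)$ of two consecutive integers is even, so $6\mid 3m(m+1)$, and again $p\equiv1\pmod 6$. This version shows the congruence holds for all centred hexagonal numbers, not just the $3$-smooth slice. I would record it in a remark, since the filter is only informative combined with the others.

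There is no real obstacle here. The only point needing care is that the hypothesis $a,b\ge1$ is used in the first route: it guarantees that the $2$-part and $3$-part of $F$ are nontrivial. The second route does not even need this hypothesis. I would also note the consequence used in \eqref{eq:wit}: both $(p-1)/2$ and $(p-1)/3$ are integers, which justifies the reformulation of witness validity as quadratic and cubic non-residuosity.
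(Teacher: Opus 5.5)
Your proof is correct and is essentially the paper's: the paper gets $p$ odd from $m(m+1)$ even and $p\equiv1\pmod 3$ from $3\mid 3m(m+1)$, then combines by CRT. That is your second route, and you rightly note that the paper's appeal to $m$ being odd is not needed; your first route via Lemma~\ref{lem:F} reads the same fact off $p-1=2^{a}3^{b+1}m$, without circularity since that lemma does not use the present theorem. One small quibble: you say $a,b\ge1$ is needed for the $3$-part of $F$ to be nontrivial, but $3^{b+1}$ is nontrivial already for $b=0$.
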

\begin{proof}
$m=2^{a}3^{b}-1$ is odd, so $m(m+1)$ is even and $p=3m(m+1)+1$ is odd; and $3\mid 3m(m+1)$ gives
$p\equiv1\pmod3$. By CRT, $p\equiv1\pmod6$.
\end{proof}

\begin{theorem}[$(-3)$-sieve; quadratic reciprocity]\label{thm:sieve}
Let $m\in\N^{*}$, $p=3m(m+1)+1$, and $q\ge5$ a prime with $q\equiv2\pmod3$. Then $q\nmid p$.
\end{theorem}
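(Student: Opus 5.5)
Suppose, for contradiction, that $q\mid p$. The plan is to complete the square and reduce the divisibility to a statement about the Legendre symbol $\leg{-3}{q}$, and then let quadratic reciprocity contradict the hypothesis $q\equiv 2\pmod 3$.

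The key identity is
\[
4p=12m^2+12m+4=3(2m+1)^2+1 .
\]
If $q\mid p$, then $3(2m+1)^2\equiv -1\pmod q$.

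\begin{itemize}[leftmargin=1.4em]
\item Since $q\ge5$, we have $q\nmid 3$. Also $q\nmid 2m+1$, because otherwise the congruence would give $0\equiv-1\pmod q$.
\item Multiplying by $3$ gives $\bigl(3(2m+1)\bigr)^2\equiv-3\pmod q$. Hence $-3$ is a nonzero quadratic residue modulo $q$, that is, $\leg{-3}{q}=1$.
\item By quadratic reciprocity,
\[
\leg{-3}{q}=\leg{-1}{q}\leg{3}{q}=\leg{q}{3}.
\]
The two factors $(-1)^{(q-1)/2}$ cancel. This is the standard evaluation of the symbol at $-3$.
\item For $q\equiv2\pmod3$ we get $\leg{q}{3}=\leg{2}{3}=-1$, which contradicts $\leg{-3}{q}=1$.
\end{itemize}

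An equivalent way to see the last step: a root of $x^2\equiv-3$ would give a primitive cube root of unity $(-1+x)/2$ modulo $q$. That would force $3\mid q-1$, again contradicting $q\equiv 2\pmod 3$.

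There is no serious obstacle. The one point that needs care is the nondegeneracy check that $q\nmid 3(2m+1)$, so that the residue obtained is a nonzero square. The hypothesis $q\ge5$ is exactly what rules out $q=2$ and $q=3$, where the reciprocity step does not apply. In fact $p$ is odd and $p\equiv1\pmod3$ by Theorem~\ref{thm:mod6}, so $2$ and $3$ are excluded anyway.
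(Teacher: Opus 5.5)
Your proof is correct and is essentially the paper's argument: your $\bigl(3(2m+1)\bigr)^2\equiv-3\pmod q$ is exactly the paper's $(6m+3)^2\equiv-3\pmod q$, followed by the same quadratic-reciprocity evaluation $\leg{-3}{q}=+1\iff q\equiv1\pmod3$. Your closing aside cites Theorem~\ref{thm:mod6}, which is stated only for $m=2^{a}3^{b}-1$, but the same one-line parity and mod-$3$ argument works for every $m$, and the aside is not needed for the proof anyway.
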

\begin{proof}
$q\mid p$ is equivalent to $3m^2+3m+1\equiv0\pmod q$; completing the square,
$(6m+3)^2\equiv-3\pmod q$, so $-3$ must be a quadratic residue mod $q$. By quadratic
reciprocity \cite{IR}, $\leg{-3}{q}=+1$ iff $q\equiv1\pmod3$. For $q\equiv2\pmod3$ there is no
solution, so $q\nmid p$.
\end{proof}

\begin{corollary}\label{cor:sieve}
In trial division up to $L$, only primes $q\equiv1\pmod3$ need be tested, i.e.\ half of the
primes in $[5,L]$ by Dirichlet.
\end{corollary}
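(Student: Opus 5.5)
The statement to prove is Corollary~\ref{cor:sieve}. It has two parts: a deterministic one (in trial division only primes $q\equiv1\pmod3$ need be tested) and an asymptotic one (these make up half of the primes in $[5,L]$).

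\emph{Deterministic part.} I would split the primes $q\le L$ by residue class modulo $3$.
\begin{itemize}[leftmargin=1.4em]
\item $q=2$ and $q=3$: Theorem~\ref{thm:mod6} gives $p\equiv1\pmod 6$. Hence $p$ is odd and $p\equiv 1\pmod 3$, so neither $2$ nor $3$ divides $p$, and these need not be tested.
\item $q\ge5$ with $q\equiv2\pmod3$: Theorem~\ref{thm:sieve} applies verbatim, with $m=2^a3^b-1\in\N^{*}$, and gives $q\nmid p$.
\end{itemize}
So the only possible prime divisors of $p$ up to $L$ lie in the class $q\equiv1\pmod3$. Trial division restricted to this class therefore finds a factor $\le L$ exactly when unrestricted trial division would. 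This part is a direct combination of the two earlier theorems and presents no obstacle.

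\emph{Asymptotic part.} Here I would invoke Dirichlet's theorem in its quantitative form, the prime number theorem for arithmetic progressions. Modulo $3$ there are $\varphi(3)=2$ reduced classes, so
\[
\pi(L;3,1)\sim\pi(L;3,2)\sim\tfrac12\,\pi(L).
\]
Discarding the finitely many primes below $5$ does not affect the asymptotics. Hence the proportion of primes in $[5,L]$ with $q\equiv1\pmod3$ tends to $1/2$.

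The only subtlety is wording. "Half" should be read asymptotically as $L\to\infty$, not exactly for each $L$: Chebyshev's bias makes $\pi(L;3,2)>\pi(L;3,1)$ for most small $L$. I would state the claim as a limit and cite the prime number theorem for arithmetic progressions. Plain Dirichlet only gives infinitude of primes in each class, not equal density.
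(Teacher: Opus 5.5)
Your proposal is correct and follows the paper's route: the corollary is read off directly from Theorem~\ref{thm:sieve}, with $q=2,3$ disposed of by Theorem~\ref{thm:mod6}, and the ``half'' is the equidistribution of primes between the two reduced classes mod $3$. Two of your points make explicit what the paper leaves implicit: the handling of $2$ and $3$, and the remark that ``half'' must be read asymptotically via the prime number theorem for arithmetic progressions rather than bare Dirichlet, given Chebyshev's bias.
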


\begin{theorem}[Forbidden classes mod $7$]\label{thm:mod7}
Let $m=2^{a}3^{b}-1$. With
$\mathcal F_7=\{(0,2),(0,3),(1,0),(1,1),(2,4),(2,5)\}\subset \Z/3\times\Z/6$,
\[
7\mid p(a,b)\iff (a\bmod3,\,b\bmod6)\in\mathcal F_7 ,
\]
exactly $1/3$ of all residue classes.
\end{theorem}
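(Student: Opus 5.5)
Since $p\equiv 0\pmod 7$ depends only on $m\bmod 7$, I will first find the admissible residues of $m$, then translate the condition on $m=2^a3^b-1$ into a condition on $(a,b)$.

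\emph{Step 1: residues of $m$.} We solve $3m^2+3m+1\equiv0\pmod 7$. As in the proof of Theorem~\ref{thm:sieve}, multiplying by $12$ gives $(6m+3)^2\equiv-3\equiv4\pmod 7$. Hence $6m+3\equiv\pm2$, i.e.\ $-m\equiv -1$ or $-5$, so $m\equiv1$ or $m\equiv5\pmod7$. Direct check: $3+3+1=7$ and $75+15+1=91=7\cdot13$. So $7\mid p$ if and only if $m+1=2^a3^b\equiv 2$ or $6\pmod 7$.

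\emph{Step 2: reduce to exponents.} In $(\Z/7)^\times$, which is cyclic of order $6$, the element $3$ is a generator. Also $2\equiv3^2$, since $9\equiv2$. Therefore $2^a3^b\equiv3^{2a+b}$, and this residue depends only on $2a+b\bmod 6$. Because $2a\bmod 6$ depends only on $a\bmod3$, the condition is determined by $(a\bmod3,b\bmod6)$, as the statement asserts. From $3^1=3$ and $3^2=2$ we get $2\equiv3^2$ and $6\equiv-1\equiv3^3$. So the condition becomes
\[
2a+b\equiv 2\ \text{or}\ 3\pmod 6.
\]

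\emph{Step 3: enumerate.} For each value of $a\bmod3$, solve for $b\bmod6$:
\begin{itemize}
\item $a\equiv0$: $b\equiv2,3$.
\item $a\equiv1$: $b\equiv0,1$.
\item $a\equiv2$: $b\equiv-2,-1\equiv4,5$.
\end{itemize}
This list is exactly $\mathcal F_7$, and it has $6$ elements out of $18$ classes, i.e.\ one third.

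The only delicate point is confirming that $3$ generates $(\Z/7)^\times$ and that $\log_3 2=2$. Neither is a real obstacle; both are settled by computing the powers of $3$ modulo $7$ directly.
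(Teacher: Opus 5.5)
Your proof is correct and follows the paper's route. Both arguments reduce $7\mid p$ to $2^a3^b\equiv 2$ or $6\pmod 7$: you complete the square in $m$, while the paper factors $t^2-t-2=(t-2)(t+1)$ in $t=m+1$. The one small variation is that you write $2\equiv 3^2$ to obtain the linear congruence $2a+b\equiv 2,3\pmod 6$. This replaces the paper's enumeration of the $18$ entries of Table~\ref{tab:t} and makes the count of $6$ classes out of $18$ immediate.
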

\begin{proof}
Since $\mathrm{ord}_7(2)=3$ and $\mathrm{ord}_7(3)=6$, $t:=2^{a}3^{b}\bmod7$ depends only on
$(a\bmod3,b\bmod6)$, and $m\equiv t-1$, $p\equiv3t^2-3t+1\pmod7$. Multiplying $7\mid p$ by
$5\equiv3^{-1}$ gives $t^2-t-2\equiv0$, i.e.\ $(t-2)(t+1)\equiv0$, so $t\in\{2,6\}$. Enumerating
the $18$ values of $t$ (Table~\ref{tab:t}) yields exactly $\mathcal F_7$.
\end{proof}

\begin{table}[h]
\centering
\caption{$t(r,s)=2^{r}3^{s}\bmod7$. Bold entries ($t\in\{2,6\}$) give $7\mid p$.}
\label{tab:t}
\begin{tabular}{c|cccccc}
\toprule
$r\backslash s$&0&1&2&3&4&5\\\midrule
0&1&3&\textbf{2}&\textbf{6}&4&5\\
1&\textbf{2}&\textbf{6}&4&5&1&3\\
2&4&5&1&3&\textbf{2}&\textbf{6}\\
\bottomrule
\end{tabular}
\end{table}

\begin{remark}[Combined efficiency]
The mod-$6$ identity is automatic. The two a-priori filters (Corollary~\ref{cor:sieve} and
Theorem~\ref{thm:mod7}) together remove $\approx 87\%$ of candidates before any large-number
arithmetic; see Table~\ref{tab:filter}.
\end{remark}

\section{Deterministic Pocklington witnesses}\label{sec:witnesses}

This section is the theoretical core. By \eqref{eq:wit}, the validity of a fixed witness is a
residuosity question. We resolve both witnesses exactly.

\subsection{The witness \texorpdfstring{$w_2=5$}{w2=5}}

\begin{theorem}[$w_2=5$; rigorous]\label{thm:w2}
For $p=p(a,b)$ prime, $w_2=5$ is a valid Pocklington witness if and only if
\[
m\equiv 1\text{ or }3\pmod5
\qquad\Longleftrightarrow\qquad
a-b\equiv 1\text{ or }2\pmod4 .
\]
\end{theorem}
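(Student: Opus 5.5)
By \eqref{eq:wit}, $w_2=5$ is valid exactly when $\leg{5}{p}=-1$. The plan is to compute this Legendre symbol with quadratic reciprocity, then convert the resulting condition on $m\bmod 5$ into a condition on $(a,b)$.

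\emph{Reduction to $p\bmod 5$.} Since $5\equiv1\pmod4$, quadratic reciprocity gives $\leg{5}{p}=\leg{p}{5}$. This is valid because $p$ is an odd prime, and $p\neq5$: indeed $p\ge 3\cdot5\cdot6+1$ for $a,b\ge1$. So $5$ is valid iff $p\bmod 5\in\{2,3\}$, the quadratic non-residues mod $5$.

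\emph{Reduction to $m\bmod5$.} Compute $p=3m^2+3m+1\bmod5$ for $m=0,\dots,4$. The values are $1,2,4,2,1$ respectively. (Check: $m=1$ gives $7\equiv2$; $m=2$ gives $19\equiv4$; $m=3$ gives $37\equiv2$; $m=4$ gives $61\equiv1$.) Hence $\leg{p}{5}=-1$ iff $m\equiv1,3\pmod5$. This gives the first equivalence.

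\emph{Translation to $(a,b)$.} We have $m+1=2^a3^b\equiv 2^a\cdot 2^{-b}\cdot(2\cdot3)^b\equiv 2^{a-b}\cdot 6^b\equiv 2^{a-b}\pmod5$, since $6\equiv1$. Because $2$ has order $4$ mod $5$, $m+1\equiv 2^{a-b}$ depends only on $a-b\bmod4$ and takes the values $1,2,4,3$ for $a-b\equiv0,1,2,3\pmod4$. The condition $m\equiv1,3$ is $m+1\equiv2,4$, which holds exactly when $a-b\equiv1,2\pmod4$. This gives the second equivalence.

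The only step needing care is the hypothesis check for reciprocity: $p$ must be an odd prime different from $5$. Note also that the case $m\equiv2$ gives $p\equiv4$, a residue, so $5$ is not a witness there; this case is correctly excluded, and no divisibility $5\mid p$ occurs. Everything else is a finite table computation.
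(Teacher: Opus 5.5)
Your proposal is correct and follows the paper's proof essentially step for step: you reduce to $\leg{5}{p}=\leg{p}{5}$ by reciprocity, tabulate $p \bmod 5$ against $m \bmod 5$, and rewrite $m+1=2^a3^b$ as a power of $2$ modulo $5$. The only differences are cosmetic: you use $6\equiv 1$ where the paper uses $3\equiv 2^3$ (the same observation), and you explicitly check that $p\ge 91$ is odd and different from $5$, which the paper leaves implicit.
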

\begin{proof}
By \eqref{eq:wit}, $w_2=5$ is valid iff $\leg5p=-1$. As $5\equiv1\pmod4$, quadratic reciprocity
gives $\leg5p=\leg p5$, and $\leg p5=-1$ iff $p\equiv2,3\pmod5$. Modulo $5$,
$p=3m(m+1)+1$ takes the values $1,2,4,2,1$ for $m\equiv0,1,2,3,4$; hence $p\equiv2,3\pmod5$
(the value $3$ never occurs) iff $m\equiv1,3\pmod5$. Finally $m+1=2^{a}3^{b}$ and
$3\equiv2^{3}\pmod5$ give $m+1\equiv2^{\,a+3b}\pmod5$ with $\mathrm{ord}_5(2)=4$, so
$m\equiv1\iff a+3b\equiv1$ and $m\equiv3\iff a+3b\equiv2\pmod4$. Since $a+3b\equiv a-b\pmod4$,
the claim follows.
\end{proof}

\begin{remark}
For $a-b\equiv0,3\pmod4$, $5$ is a quadratic residue and $w_2=5$ \emph{fails}; the least
quadratic non-residue is then a valid witness.
\end{remark}

\subsection{An explicit Eisenstein prime}

\begin{lemma}\label{lem:pi}
Let $\omega=e^{2\pi i/3}$. Then in $\Z[\omega]$,
\[
p=(m+1)^3-m^3=\pi\,\overline\pi,\qquad \pi:=(1+m)-m\omega,\qquad N(\pi)=p,
\]
so $\pi$ is a prime of $\Z[\omega]$ above $p$. Consequently $7$ is a cubic residue mod $p$ iff
$\cubsym{7}{\pi}=1$, where $\cubsym{\cdot}{\pi}$ is the cubic residue symbol.
\end{lemma}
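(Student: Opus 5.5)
The plan has three steps: verify the factorisation by direct expansion, show that an element of prime norm is prime in $\Z[\omega]$, and identify residuosity mod $p$ with residuosity mod $\pi$ through the residue field.

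\emph{Step 1: the factorisation.} For $x,y\in\Z$,
\[
(x-y\omega)(x-y\overline\omega)=x^2-xy(\omega+\overline\omega)+y^2\omega\overline\omega=x^2+xy+y^2 ,
\]
since $\omega+\overline\omega=-1$ and $\omega\overline\omega=1$. Put $x=m+1$ and $y=m$. This gives $N(\pi)=(m+1)^2+m(m+1)+m^2=3m^2+3m+1=p$. The identity $(m+1)^3-m^3=3m^2+3m+1$ is immediate. Also $\overline\pi=(1+m)-m\omega^2$, because complex conjugation sends $\omega$ to $\omega^2$.

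\emph{Step 2: primality of $\pi$.} Here we assume $p$ is a rational prime, as in Theorem~\ref{thm:w3}. The norm is multiplicative and $\Z[\omega]$ is a UFD. If $\pi=\alpha\beta$, then $N(\alpha)N(\beta)=p$, so one factor has norm $1$ and is a unit. Hence $\pi$ is irreducible, and therefore prime. Since $\pi\overline\pi=p$, it lies above $p$.

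\emph{Step 3: residuosity.} The residue field $\Z[\omega]/(\pi)$ has $N(\pi)=p$ elements. The natural map $\Z/p\Z\to\Z[\omega]/(\pi)$ is injective, because $(\pi)\cap\Z=p\Z$: that intersection is a proper ideal of $\Z$ containing $p$. A map between two sets of $p$ elements that is injective is an isomorphism.

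It follows that $7$ is a cube mod $p$ iff $7$ is a cube mod $\pi$. Since $p\equiv1\pmod3$ by Theorem~\ref{thm:mod6}, $\pi$ is not an associate of $1-\omega$, so the cubic residue symbol $\cubsym{\cdot}{\pi}$ is defined. Also $7\not\equiv0\pmod\pi$, because $p\ne7$ when $p$ is prime and $m\ge5$. Euler's criterion in the field of $p$ elements then gives $\cubsym{7}{\pi}\equiv7^{(p-1)/3}\pmod\pi$, with the value $1$ exactly for cubic residues.

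The only point needing care is the isomorphism of residue fields; all other steps are routine verification.
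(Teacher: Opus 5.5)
Your proof is correct, and it is more complete than the paper's. The factorisation is the same computation: the paper reads it off $X^3-Y^3=(X-Y)(X-\omega Y)(X-\omega^2Y)$ with $X-Y=1$, while you verify the norm form $x^2+xy+y^2$ directly. For primality, the paper invokes the splitting of a rational prime $p\equiv1\pmod3$ into two conjugate primes of norm $p$ and identifies $\pi$ as one of them. Your argument (prime norm $\Rightarrow$ irreducible $\Rightarrow$ prime in the UFD $\Z[\omega]$) uses only multiplicativity of the norm and does not need the splitting law. The real difference is your Step~3. The paper's proof ends after the factorisation and never justifies the clause ``$7$ is a cubic residue mod $p$ iff $\cubsym{7}{\pi}=1$''. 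Your isomorphism $\Z/p\Z\cong\Z[\omega]/(\pi)$, obtained from $(\pi)\cap\Z=p\Z$ and the count $N(\pi)=p$, together with the checks $N(\pi)\neq3$ and $7\notin(\pi)$, is exactly what that clause requires. Note that you only use $p\neq7$, not $m\ge5$. So your argument also covers the later uses of the lemma for general $m$ in Propositions~\ref{prop:cubic} and~\ref{prop:cubicgen}, which assume $p\neq7$ explicitly. At $p=7$ (that is, $m=1$) one has $\pi\mid7$, the symbol vanishes, and the equivalence degenerates.
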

\begin{proof}
$3m^2+3m+1=(m+1)^3-m^3$ (centred hexagonal identity). With
$X^3-Y^3=(X-Y)(X-\omega Y)(X-\omega^2Y)$, $X=m+1$, $Y=m$:
$p=\bigl((1+m)-\omega m\bigr)\bigl((1+m)-\omega^2 m\bigr)$. A direct computation gives
$N(\pi)=(1+m)^2-(1+m)(-m)+m^2=1+3m+3m^2=p$; since $p\equiv1\pmod3$ is prime it splits into two
conjugate primes of norm $p$, and $\pi$ is one of them.
\end{proof}

\subsection{The witness \texorpdfstring{$w_3=7$}{w3=7}}

The proof rests on the following proposition, which holds for the broader family
$p=3m^2+3m+1$ subject only to $m\equiv2\pmod3$.

\begin{proposition}[Cubic character of $7$]\label{prop:cubic}
Let $p=3m^2+3m+1$ be prime, $p\neq7$, with $m\equiv2\pmod3$. Then
\[
7\text{ is a cubic residue mod }p\iff m\equiv 2\text{ or }6\pmod7 .
\]
\end{proposition}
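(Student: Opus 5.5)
The plan is to compute the cubic residue symbol $\cubsym{7}{\pi}$ of Lemma~\ref{lem:pi} by the cubic reciprocity law in $\Z[\omega]$ \cite{IR}. That law relates symbols between \emph{primary} primes, i.e.\ primes $\equiv 2\pmod 3$ in $\Z[\omega]$. So the first step is to normalise both sides.

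\emph{Step 1: normalise $\pi$.} The hypothesis $m\equiv2\pmod3$ gives $1+m\equiv0$ and $-m\equiv1\pmod3$, so $\pi=(1+m)-m\omega\equiv\omega\pmod 3$ up to sign. Multiplying by a suitable unit gives a primary associate $\pi'=\varepsilon\pi$; I expect $\pi'=\pm\omega^{2}\pi$, which is $\equiv\mp1\pmod 3$, and the sign is fixed by requiring $\pi'\equiv2\pmod3$. Since $\cubsym{\cdot}{\pi}$ depends only on the ideal $(\pi)$, replacing $\pi$ by $\pi'$ is harmless. This is exactly where the hypothesis $m\equiv2\pmod3$ is used: it makes the unit independent of $m$.

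\emph{Step 2: factor $7$ and apply reciprocity.} We have $7=\lambda\overline\lambda$ with $\lambda$ a primary associate of $3+\omega$ (note $N(3+\omega)=7$). Multiplicativity gives
\[
\cubsym{7}{\pi'}=\cubsym{\lambda}{\pi'}\cubsym{\overline\lambda}{\pi'} ,
\]
and cubic reciprocity (valid since $p\ne7$, so the primes are coprime) turns this into $\cubsym{\pi'}{\lambda}\cubsym{\pi'}{\overline\lambda}$. Now $\Z[\omega]/\lambda\cong\F_7$ with $\omega\mapsto r$, where $r\in\{2,4\}$ is a root of $x^2+x+1$ mod $7$; the quotient by $\overline\lambda$ sends $\omega$ to the other root. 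Hence each factor is the cubic character on $\F_7^{*}$ (where the cubes are $\{\pm1\}$) evaluated at $\varepsilon(r)\bigl((1+m)-mr\bigr)\bmod 7$. Both evaluations are explicit linear functions of $m\bmod 7$: $1-m$ for $r=2$ and $1-3m$ for $r=4$.

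\emph{Step 3: enumerate.} The product is a function of $m\bmod 7$ alone. I would tabulate the classes $m\bmod 7$ for which $7\nmid p$, namely all except $m\equiv1,4$, where $(t-2)(t+1)\equiv0$ as in Theorem~\ref{thm:mod7}. The expected outcome is that the product of the two characters is trivial exactly for $m\equiv2,6\pmod7$. Although the two cube roots of unity sit in the different residue fields $\F_7$ from $\lambda$ and $\overline\lambda$, they must be identified consistently, as values in $\{1,\omega,\omega^2\}$, before multiplying. I would do this by using $\chi_{\overline\lambda}(\overline x)=\overline{\chi_\lambda(x)}$ to rewrite both factors through $\chi_\lambda$, giving $\chi_\lambda(\pi')\,\overline{\chi_\lambda(\overline{\pi'})}=\chi_\lambda\bigl(\pi'\,\overline{\pi'}^{\,2}\bigr)$. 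This is then a single cubic character mod $7$ of an explicit residue.

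\emph{Main obstacle.} I expect the difficulty to be the bookkeeping of units and signs: the choice of primary associates $\pi'$ and $\lambda$, and the unit factor's contribution $\chi_\lambda(\varepsilon)\overline{\chi_\lambda(\overline\varepsilon)}$, which need not cancel. I would fix conventions, then sanity-check the final table against small primes in the family. One check is $m=2$, $p=19$, where the proposition predicts that $7$ is a cube mod $19$; indeed $4^3=64\equiv7\pmod{19}$. A second check is $m=5$, $p=91$, which is not prime. A further check is $m=8$, $p=217=7\cdot31$, which is excluded. I would also use a prime with $m\equiv 2\pmod 3$ and $m\not\equiv2,6\pmod 7$, computing $7^{(p-1)/3}\bmod p$ directly to confirm the orientation of the answer.
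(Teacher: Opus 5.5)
Your proposal follows the paper's proof. It uses the same primary normalisation $\pi'=-\omega^2\pi=(1+2m)+(1+m)\omega$ (the paper's $\pi^{*}$) and the same primary factorisation $7=\lambda\overline\lambda$ with $\lambda=2+3\omega$, $\overline\lambda=-1-3\omega$. It then applies cubic reciprocity using $p\neq7$ and reduces to $\F_7$ via $\omega\mapsto4$ resp.\ $\omega\mapsto2$.

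The one real difference is how you combine the two factors. The paper reads $\cubsym{\pi'}{\lambda}$ and $\cubsym{\pi'}{\overline\lambda}$ separately in $\{1,\omega,\omega^2\}$ through the two different embeddings, then multiplies. You instead fold them into $\chi_\lambda(\pi'\,\overline{\pi'}^{\,2})$ via $\chi_{\overline\lambda}(x)=\overline{\chi_\lambda(\overline x)}$. This is equivalent, and it leaves only the test ``is it $\pm1$ in $\F_7$''.

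You still have to carry out the enumeration you only ``expect''. Modulo $\lambda$ one has $\pi'\equiv5-m$ and $\overline{\pi'}\equiv3+4m$; these are your $5(1-3m)$ and $3(1-m)$. Then $(5-m)(3+4m)^2\equiv3,\,-1,\,2,\,4,\,-1$ for $m\equiv0,2,3,4,6\pmod7$. This gives exactly $m\equiv2,6$.

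You are also right that the unit does not drop out: $\chi_\lambda(\varepsilon)\chi_{\overline\lambda}(\varepsilon)=\omega^2$. So $\varepsilon$ must stay inside $\pi'$, as it does in the paper.

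There are two slips to fix.

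\emph{Excluded classes.} The classes with $7\mid p$ are $m\equiv1,5\pmod7$ (i.e.\ $t=m+1\in\{2,6\}$), not $m\equiv1,4$. Your own forms $1-m$ and $1-3m$ vanish at $m\equiv1$ and $m\equiv5$, and your example $m=5$ gives $p=91=7\cdot13$. As written, you would omit the admissible class $m\equiv4$ from the table. That would leave the ``only if'' direction unchecked for that class; for instance $m=11$, $p=397$ has $7^{132}\equiv34\not\equiv1$.

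\emph{Sign.} $\pm\omega^2\pi\equiv\pm1\pmod3$, not $\mp1$, so the primary associate is $-\omega^2\pi$. This sign is harmless anyway, since $\chi(-1)=1$.
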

\begin{proof}
Factor $7=\lambda\overline\lambda$ with $\lambda=3+\omega$, $\overline\lambda=2-\omega$
($N(\lambda)=7$). Their \emph{primary} associates (those $\equiv2\bmod3$) are
$\lambda^{*}=2+3\omega=-\omega^2\lambda$ and
$\overline\lambda^{*}=-1-3\omega=-\omega\,\overline\lambda$, equivalently
$\lambda=-\omega\,\lambda^{*}$ and $\overline\lambda=-\omega^2\,\overline\lambda^{*}$.
Since $m\equiv2\pmod3$, the primary
associate of $\pi=(1+m)-m\omega$ is
\[
\pi^{*}=-\omega^{2}\pi=(1+2m)+(1+m)\omega \qquad(1+2m\equiv2,\ 1+m\equiv0\bmod3).
\]
The cubic symbol depends only on the prime ideal in its lower entry, so
$\cubsym{7}{\pi}=\cubsym{7}{\pi^{*}}$. By multiplicativity and $\cubsym{-1}{\cdot}=1$,
\[
\cubsym{7}{\pi^{*}}
=\cubsym{\omega}{\pi^{*}}\cubsym{\lambda^{*}}{\pi^{*}}\cdot
 \cubsym{\omega^2}{\pi^{*}}\cubsym{\overline\lambda^{*}}{\pi^{*}},
\]
and $\cubsym{\omega}{\pi^{*}}\cubsym{\omega^2}{\pi^{*}}=\cubsym{\omega^3}{\pi^{*}}=1$: the units
cancel. Eisenstein's law of cubic reciprocity \cite[Ch.~9]{IR}, applied to the primary primes
$\lambda^{*},\overline\lambda^{*},\pi^{*}$, gives
$\cubsym{\lambda^{*}}{\pi^{*}}=\cubsym{\pi^{*}}{\lambda^{*}}$ and similarly for
$\overline\lambda^{*}$, so
\[
\cubsym{7}{\pi}=\cubsym{\pi^{*}}{\lambda^{*}}\cdot\cubsym{\pi^{*}}{\overline\lambda^{*}}.
\]
We evaluate in $\Z[\omega]/(\lambda^{*})\cong\F_7$ and
$\Z[\omega]/(\overline\lambda^{*})\cong\F_7$. From $\lambda^{*}=2+3\omega\equiv0$ we get
$\omega\equiv4\pmod7$ (and $\omega^2\equiv2$); from $\overline\lambda^{*}=-1-3\omega\equiv0$,
$\omega\equiv2$ (and $\omega^2\equiv4$). Hence
\[
\pi^{*}\equiv(1+2m)+4(1+m)=5-m \ (\mathrm{mod}\ \lambda^{*}),\qquad
\pi^{*}\equiv(1+2m)+2(1+m)=3+4m \ (\mathrm{mod}\ \overline\lambda^{*}).
\]
Since $\cubsym{\alpha}{\theta}\equiv\alpha^{(N(\theta)-1)/3}=\alpha^{2}\pmod\theta$ for
$N(\theta)=7$, we have $\cubsym{\pi^{*}}{\lambda^{*}}\equiv(5-m)^2$ and
$\cubsym{\pi^{*}}{\overline\lambda^{*}}\equiv(3+4m)^2$ in $\F_7$, each lying in the cube-roots
subgroup $\{1,2,4\}=\{1,\omega,\omega^2\}$ under the identifications above ($\omega\mapsto4$
resp.\ $\omega\mapsto2$). Multiplying in $\{1,\omega,\omega^2\}$, the product equals $1$ exactly
for $m\equiv2,6\pmod7$; this is the finite verification of Table~\ref{tab:cubic}. (The omitted
classes $m\equiv1,5\pmod7$ give $5-m\equiv0$ or $3+4m\equiv0$, i.e.\ $7\mid p$.)
\end{proof}

\begin{table}[h]
\centering
\caption{Finite evaluation in $\F_7$ (Proposition~\ref{prop:cubic}); $\omega\mapsto4$ mod
$\lambda^{*}$, $\omega\mapsto2$ mod $\overline\lambda^{*}$.}
\label{tab:cubic}
\begin{tabular}{c|cc|cc|c}
\toprule
$m\bmod7$&$(5-m)^2$&$\mapsto$&$(3+4m)^2$&$\mapsto$&$\cubsym{7}{\pi}$\\\midrule
$0$&$4$&$\omega$&$2$&$\omega$&$\omega^2\neq1$\\
$2$&$2$&$\omega^2$&$2$&$\omega$&$1$ (residue)\\
$3$&$4$&$\omega$&$1$&$1$&$\omega\neq1$\\
$4$&$1$&$1$&$4$&$\omega^2$&$\omega^2\neq1$\\
$6$&$1$&$1$&$1$&$1$&$1$ (residue)\\
\bottomrule
\end{tabular}
\end{table}

\begin{theorem}[$w_3=7$; rigorous]\label{thm:w3}
For $p=p(a,b)$ prime, $w_3=7$ is a valid Pocklington witness if and only if
\[
m\not\equiv2\pmod7
\qquad\Longleftrightarrow\qquad
(a\bmod3,b\bmod6)\notin\{(0,1),(1,5),(2,3)\}.
\]
\end{theorem}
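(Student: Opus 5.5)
The plan is to reduce Theorem~\ref{thm:w3} to Proposition~\ref{prop:cubic} and then translate the resulting condition on $m$ into residue classes of $(a,b)$.

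\emph{Checking the hypotheses.} First verify that the hypotheses of Proposition~\ref{prop:cubic} hold for $p=p(a,b)$.
\begin{itemize}[leftmargin=1.4em]
\item Since $b\ge1$, we have $m+1=2^a3^b\equiv0\pmod3$, so $m\equiv2\pmod3$.
\item $p\neq7$ because $p\ge p(1,1)=3\cdot5\cdot6+1=91$.
\end{itemize}
With $p$ prime, Proposition~\ref{prop:cubic} says that $7$ is a cubic residue mod $p$ if and only if $m\equiv2$ or $6\pmod7$.

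\emph{Removing the class $m\equiv6$.} By \eqref{eq:wit}, $w_3=7$ is valid if and only if $7$ is a cubic non-residue. The class $m\equiv6\pmod7$ means $t=m+1\equiv0\pmod7$, where $t=2^a3^b\bmod7$ as in the proof of Theorem~\ref{thm:mod7}. This is impossible because $7\nmid 2^a3^b$. Hence, for $p$ prime, $7$ is a cubic residue exactly when $m\equiv2\pmod7$, which gives the first equivalence.

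\emph{A consistency check.} The classes $m\equiv1,5\pmod7$, i.e.\ $t\in\{2,6\}$, give $7\mid p$, contradicting primality of $p>7$. They therefore never arise, consistent with Theorem~\ref{thm:mod7}.

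\emph{Translating to $(a,b)$.} The condition $m\equiv2\pmod7$ is equivalent to $t\equiv3\pmod7$. As in Theorem~\ref{thm:mod7}, $t$ depends only on $(a\bmod3,\,b\bmod6)$ because $\mathrm{ord}_7(2)=3$ and $\mathrm{ord}_7(3)=6$. Reading Table~\ref{tab:t}, the entries with $t=3$ are at $(0,1)$, $(1,5)$ and $(2,3)$. One can check this directly using $2^3\equiv1$ and $3^6\equiv1\pmod7$:
\begin{itemize}[leftmargin=1.4em]
\item $(0,1)$: $2^0\cdot3^1=3$.
\item $(1,5)$: $2\cdot3^5=2\cdot5=10\equiv3$.
\item $(2,3)$: $4\cdot3^3=4\cdot6=24\equiv3$.
\end{itemize}
This gives the second equivalence.

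\emph{Main obstacle.} The argument has essentially no difficulty beyond Proposition~\ref{prop:cubic}, which carries all the cubic-reciprocity content. The only step needing care is eliminating $m\equiv6\pmod7$ via $7\nmid m+1$.
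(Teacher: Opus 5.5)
Your proposal is correct and follows the paper's proof essentially step for step. Like the paper, you check $m\equiv 2\pmod 3$, invoke Proposition~\ref{prop:cubic}, discard $m\equiv 6\pmod 7$ via $7\nmid m+1$, and read the class $m+1\equiv 3\pmod 7$ off Table~\ref{tab:t}. Your dismissal of $p=7$ by the bound $p\ge p(1,1)=91$ is, if anything, cleaner than the paper's remark about $m=1$, since $m=1$ cannot occur in the subfamily.
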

\begin{proof}
If $p=7$ then $m=1$ and $\pi$ is associate to $\lambda$, so $w_3=7$ does not apply; discard this
case. For $p>7$: since $b\ge1$, $3\mid3^b$, so $m+1\equiv0\pmod3$, i.e.\ $m\equiv2\pmod3$; and
$m+1=2^{a}3^{b}$ is coprime to $7$, so $m\not\equiv6\pmod7$; finally $m\equiv1,5\pmod7$ give
$7\mid p$ (Theorem~\ref{thm:mod7}), excluded. Thus $m\bmod7\in\{0,2,3,4\}$ and
Proposition~\ref{prop:cubic} applies: $7$ is a cubic residue iff $m\equiv2$ or $6\pmod7$, hence
(as $m\not\equiv6$) iff $m\equiv2\pmod7$. By \eqref{eq:wit}, $w_3=7$ is valid iff $7$ is a cubic
\emph{non}-residue, i.e.\ $m\not\equiv2\pmod7$. The translation to $(a\bmod3,b\bmod6)$ follows
from $m\equiv2\pmod7\iff m+1\equiv3\pmod7$, whose solutions in Table~\ref{tab:t} are
$\{(0,1),(1,5),(2,3)\}$.
\end{proof}

\begin{remark}[Role of $3$-smoothness]
Proposition~\ref{prop:cubic} distinguishes \emph{two} residue classes ($m\equiv2,6$) and holds
for any $m\equiv2\pmod3$. It is the $3$-smoothness $m+1=2^{a}3^{b}$ that both forces
$m\equiv2\pmod3$ (fixing the primary normalisation of $\pi$) and excludes $m\equiv6\pmod7$,
collapsing the criterion to the single congruence $m\equiv2\pmod7$. Without smoothness, cubic
residuosity of $7$ depends on $m\bmod21$ (Proposition~\ref{prop:cubicgen}), not $m\bmod7$ alone.
\end{remark}

\subsection{The cubic character of \texorpdfstring{$7$}{7} for the general family}

The same computation resolves cubic residuosity of $7$ for \emph{every} prime $p=3m^2+3m+1$,
not only the $3$-smooth subfamily. This settles, as a theorem, the cubic character of $7$ for
the full family $p=3m^2+3m+1$, extending the analysis of~\cite{series} beyond the subfamily.

\begin{proposition}[Cubic character of $7$, general family]\label{prop:cubicgen}
Let $p=3m^2+3m+1$ be prime, $p\neq7$. Then $7$ is a cubic residue modulo $p$ if and only if
\[
m\equiv 0,\,2,\,10,\,18,\ \text{or}\ 20 \pmod{21},
\]
equivalently, according to $m\bmod3$, if and only if the condition of Table~\ref{tab:cubicgen}
holds. Exactly $5$ of the $15$ admissible classes modulo $21$ are cubic-residue classes.
\end{proposition}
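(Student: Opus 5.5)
The plan is to split according to $m\bmod 3$ and, in each class, repeat the computation in the proof of Proposition~\ref{prop:cubic} with the correct primary associate of $\pi=(1+m)-m\omega$ from Lemma~\ref{lem:pi}. The class $m\equiv2\pmod3$ is already settled by Proposition~\ref{prop:cubic}: $7$ is a cubic residue iff $m\equiv2,6\pmod7$. By CRT these are $m\equiv2,20\pmod{21}$.

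For the other two classes, the unit-cancellation and reciprocity steps of Proposition~\ref{prop:cubic} go through verbatim for any primary $\pi^*$. Write $7=(-\omega\lambda^*)(-\omega^2\overline\lambda^*)$. The unit symbols multiply to $\cubsym{\omega^3}{\pi^*}=1$, and Eisenstein reciprocity then gives
\[
\cubsym{7}{\pi}=\cubsym{\pi^*}{\lambda^*}\cubsym{\pi^*}{\overline\lambda^*}.
\]
It therefore suffices to determine $\pi^*$ and reduce it in $\F_7$ via $\omega\mapsto4$ (mod $\lambda^*$) and $\omega\mapsto2$ (mod $\overline\lambda^*$). Each reduction is then squared and read off in $\{1,2,4\}=\{1,\omega,\omega^2\}$.

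\emph{Case $m\equiv0\pmod3$.} Here $\pi=(1+m)-m\omega$ has coefficients $\equiv(1,0)\pmod3$, so $\pi^*=-\pi$. The reductions are $-(1+m)+4m=3m-1$ modulo $\lambda^*$ and $-(1+m)+2m=m-1$ modulo $\overline\lambda^*$. I would tabulate $(3m-1)^2$ and $(m-1)^2$ for $m\in\{0,2,3,4,6\}\pmod7$, map each value to a power of $\omega$ using the respective identification, and multiply. I expect the product to equal $1$ exactly for $m\equiv0,4\pmod7$, i.e.\ $m\equiv0,18\pmod{21}$.

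\emph{Case $m\equiv1\pmod3$.} First compute $\omega\pi=m+(1+2m)\omega$, whose coefficients are $\equiv(1,0)\pmod3$. Hence $\pi^*=-\omega\pi=-m-(1+2m)\omega$. Its reductions are $-m-4(1+2m)=-9m-4\equiv5m+3$ modulo $\lambda^*$ and $-m-2(1+2m)=-5m-2\equiv2m+5$ modulo $\overline\lambda^*$. The same finite table should produce a single residue class, $m\equiv3\pmod7$, i.e.\ $m\equiv10\pmod{21}$.

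To finish, I count admissible classes. The excluded classes $m\equiv1,5\pmod7$ are exactly those with $7\mid p$, since $p\equiv3(m-1)(m-5)\pmod7$ by Theorem~\ref{thm:mod7}'s factorisation. This leaves $5\cdot3=15$ admissible classes modulo $21$, of which the five found above are residue classes.

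The main obstacle is bookkeeping rather than ideas. One must get the primary normalisation right in each class; an error there shifts the answer by a unit factor $\omega^{\pm1}$. One must also keep the two different identifications $\omega\mapsto4$ and $\omega\mapsto2$ straight when converting the $\F_7$ values into powers of $\omega$. I would cross-check the three resulting tables numerically against small primes $p=3m^2+3m+1$, e.g.\ $m=2,3,4,6$, by computing $7^{(p-1)/3}\bmod p$ directly.
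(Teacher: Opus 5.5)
Your proposal is correct and follows the paper's proof essentially step for step. Both arguments split by $m\bmod 3$ with primary associate $\pi^{*}=u\pi$, $u=-1,-\omega,-\omega^{2}$, cancel the units, apply Eisenstein reciprocity, and reduce to the same linear forms in $\F_7$ (your $3m-1$ and $m-1$ are the paper's $3m+6$ and $m+6$); the finite tables you left unfilled do come out as you predict, namely $m\equiv0,4\pmod7$ and $m\equiv3\pmod7$, giving the classes $0,18$ and $10$ modulo $21$.
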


\begin{table}[h]
\centering
\caption{Cubic character of $7$ by residue of $m\bmod3$; primary associate
$\pi^{*}=u\pi$ of $\pi=(1+m)-m\omega$, reduced modulo $\lambda^{*}$ ($\omega\equiv4$) and
$\overline\lambda^{*}$ ($\omega\equiv2$) in $\F_7$. The row $m\equiv2$ is
Proposition~\ref{prop:cubic}.}
\label{tab:cubicgen}
\begin{tabular}{c|c|c|cc|c}
\toprule
$m\bmod3$&$u$&$\pi^{*}$&$\pi^{*}\!\!\bmod\lambda^{*}$&$\pi^{*}\!\!\bmod\overline\lambda^{*}$&
$7$ cubic residue $\iff$\\\midrule
$0$&$-1$&$(-1-m)+m\omega$&$3m+6$&$m+6$&$m\equiv0,4\ (7)$\\
$1$&$-\omega$&$-m+(-1-2m)\omega$&$5m+3$&$2m+5$&$m\equiv3\ (7)$\\
$2$&$-\omega^2$&$(1+2m)+(1+m)\omega$&$6m+5$&$4m+3$&$m\equiv2,6\ (7)$\\
\bottomrule
\end{tabular}
\end{table}

\begin{proof}
As in Proposition~\ref{prop:cubic}, the primary associate of $\pi=(1+m)-m\omega$ is
$\pi^{*}=u\pi$, where $u=-1,-\omega,-\omega^2$ according as $m\equiv0,1,2\pmod3$ --- the unique
unit making the leading coefficient $\equiv2\pmod3$. In every case the unit factors cancel in
$\cubsym{7}{\pi}=\cubsym{\pi^{*}}{\lambda^{*}}\cubsym{\pi^{*}}{\overline\lambda^{*}}$ (the
argument of Proposition~\ref{prop:cubic} uses only $\cubsym{-1}{\cdot}=1$ and
$\cubsym{\omega}{\cdot}\cubsym{\omega^2}{\cdot}=1$, which hold for any associate). Reducing
$\pi^{*}$ modulo $\lambda^{*}$ and $\overline\lambda^{*}$ gives the linear forms of
Table~\ref{tab:cubicgen}; squaring (the exponent $(7-1)/3=2$) and combining in
$\{1,\omega,\omega^2\}$ yields $\cubsym{7}{\pi}=1$ exactly on the listed classes. A direct check
confirms $0$ exceptions over all $1797$ such primes with $m<9000$ (Appendix).
\end{proof}

\begin{remark}
In the present $3$-smooth subfamily one has $m\equiv2\pmod3$ and $m\not\equiv6\pmod7$ (by
$3$-smoothness), so only the class $m\equiv2\pmod7$ survives, recovering
Theorem~\ref{thm:w3}.
\end{remark}

\subsection{Deterministic witness selection}

\begin{corollary}\label{cor:det}
The fixed pair $(w_2,w_3)=(5,7)$ is \emph{not} valid for all primes of the subfamily; its
validity is the conjunction $a-b\equiv1,2\pmod4$ and $m\not\equiv2\pmod7$. A correct witness
choice is determined by the residue class of $(a,b)$:
\[
w_2=\begin{cases}5,& a-b\equiv1,2\ (4),\\ \text{least quadratic non-residue},&\text{else;}\end{cases}\qquad
w_3=\begin{cases}7,& m\not\equiv2\ (7),\\ \text{least cubic non-residue},& m\equiv2\ (7).\end{cases}
\]
In the case $m\equiv2\pmod7$ a cubic non-residue is found deterministically by testing small
primes: cubic non-residues form two-thirds of $(\Z/p\Z)^{*}$, so one is reached after a bounded
search. Hence the certificate search is provably correct, rather than reliant on fixed witnesses
succeeding by chance.
\end{corollary}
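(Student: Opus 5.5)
The plan is to assemble Corollary~\ref{cor:det} from Theorems~\ref{thm:w2} and~\ref{thm:w3} together with the residuosity reformulation \eqref{eq:wit}, and to add a short termination argument for the fallback witnesses. Throughout, $p=p(a,b)$ is assumed prime, since only then is a certificate being sought.

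\emph{Step 1: the conjunction.} By Theorem~\ref{thm:pock} and Corollary~\ref{cor:two}, the pair $(5,7)$ certifies $p$ exactly when $5$ is valid for $q=2$ and $7$ is valid for $q=3$. Theorem~\ref{thm:w2} gives the first condition as $a-b\equiv1,2\pmod4$. Theorem~\ref{thm:w3} gives the second as $m\not\equiv2\pmod7$ (the case $p=7$ cannot occur, since $m\ge5$). The conjunction is therefore immediate.

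\emph{Step 2: non-universality.} It is enough to exhibit one prime of the subfamily violating the conjunction. Take $(a,b)=(1,2)$: then $m=17$ and $p=919$, which is prime by trial division up to $30$. Here $a-b\equiv3\pmod4$, so by Theorem~\ref{thm:w2} the witness $5$ fails; directly, $919\equiv4\pmod5$ is a square mod $5$, so $\leg{5}{919}=+1$. For completeness I would also look for a small prime with $(a\bmod3,b\bmod6)\in\{(0,1),(1,5),(2,3)\}$ to show that $7$ fails as well, but one counterexample already suffices for the ``not universal'' claim.

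\emph{Step 3: correctness of the selection rule.} In the branch $a-b\equiv1,2\pmod4$ (respectively $m\not\equiv2\pmod7$), Theorem~\ref{thm:w2} (respectively Theorem~\ref{thm:w3}) shows that $5$ (respectively $7$) is valid. In the other branch the rule takes the least quadratic (respectively cubic) non-residue $w$. Then $w^{(p-1)/q}\not\equiv1$ by Euler's criterion, or its cubic analogue in the cyclic group $(\Z/p\Z)^*$, using $q\mid p-1$ from Theorem~\ref{thm:mod6}. So $w$ is valid by \eqref{eq:wit}. Hence in every residue class the rule outputs a valid pair.

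\emph{Step 4: termination of the search.} This is the main obstacle, because ``bounded search'' must be justified without GRH. Since $q\mid p-1$, the $q$-th power residues form a proper subgroup $H$ of index $q$ in the cyclic group $(\Z/p\Z)^*$. The primes below $p$ generate $(\Z/p\Z)^*$, so they cannot all lie in $H$. Therefore testing primes in increasing order reaches a non-residue after finitely many steps, with the trivial bound $w<p$.

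I would state explicitly that this trivial bound, and not the density $2/3$, is what guarantees termination. The density figure only explains why the expected number of trials is small. Much stronger bounds exist: Burgess's theorem unconditionally, or Ankeny's $O(\log^2p)$ under GRH. They are not needed for correctness, so I would mention them only in a remark.
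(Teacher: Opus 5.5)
Your proposal is correct, and its core matches the paper's. The paper gives no separate proof of the corollary: the conjunction is read off Theorems~\ref{thm:w2} and~\ref{thm:w3} through \eqref{eq:wit}, exactly as in your Steps~1 and~3.

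You depart from the paper in two places, both to your advantage.

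First, the paper asserts that $(5,7)$ is not universal without exhibiting a prime in a failing class; this is implicit only in its appendix computation. Your example $(a,b)=(1,2)$, $p=919$, settles it for $w_2$. The example you were looking for on the $w_3$ side is $(a,b)=(3,1)$. There $m=23\equiv2\pmod7$, $p=1657$ is prime, and indeed $7^{552}\equiv1\pmod{1657}$. Since $a-b=2$ for this pair, $5$ still works there.

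Second, the paper justifies termination of the fallback search only by noting that cubic non-residues make up two-thirds of $(\Z/p\Z)^{*}$. That density statement alone says nothing about where the smallest non-residue lies, so it does not actually bound the search. Your argument does prove termination: the primes below $p$ generate $(\Z/p\Z)^{*}$, so they cannot all lie in the index-$q$ subgroup of $q$-th powers. This gives the trivial bound $w<p$. It is also worth adding one line: the least $q$-th power non-residue is itself prime, since a product of residues is a residue. So testing primes in increasing order returns exactly the "least non-residue" named in the rule, tying your Step~4 to Step~3.

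In short, the paper's version gives only a heuristic for the expected number of trials. Yours gives a correct proof of the "deterministic, bounded search" claim, with Burgess or Ankeny as optional sharpening of the bound.
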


\section{The PrimeQuest algorithm}\label{sec:algo}

For target digit count $D$ the search centre is $a_0=b_0=\lfloor D/(2\log_{10}6)\rfloor\approx
0.643\,D$, and the zigzag expands outward, enumerating for each $a$ all $b$ with
$|D(a,b)-D|\le\tau$ ($\tau=10$). Each pair passes through a four-stage pipeline:
\begin{enumerate}[leftmargin=1.6em]
\item \textbf{mod-$7$ filter} (Theorem~\ref{thm:mod7}): reject if
$(a\bmod3,b\bmod6)\in\mathcal F_7$. Cost $O(1)$.
\item \textbf{Sieve} (Corollary~\ref{cor:sieve}): for primes $q\equiv1\pmod3$, $q\le L$, reject
if $p\equiv0$.
\item \textbf{Miller--Rabin}: $k$ rounds on $p(a,b)$.
\item \textbf{Pocklington}: select witnesses by Corollary~\ref{cor:det} and verify
Theorem~\ref{thm:pock}.
\end{enumerate}
Pairs are distributed over CPU cores; checkpoint-resume every $60$\,s allows exact restart. We
note that stage~4 is now deterministic: by Corollary~\ref{cor:det} the witnesses are read off
$(a,b)$ rather than searched.

\begin{remark}[MR round reduction]
Miller--Rabin is deterministic for primes (every prime passes every round); reducing the initial
round count carries zero risk of discarding a true prime, and rare composites are removed by
confirmation rounds and by Pocklington.
\end{remark}

\section{Computational results}\label{sec:results}

Table~\ref{tab:certs} lists four unconditional certificates; in each case the witnesses chosen
by Corollary~\ref{cor:det} were $w_2=5$, $w_3=7$ (all four pairs satisfy
$a-b\equiv1,2\pmod4$ and $m\not\equiv2\pmod7$).

\begin{table}[h]
\centering
\caption{Pocklington certificates from the PrimeQuest campaign.}
\label{tab:certs}
\begin{tabular}{rrrrrrr}
\toprule
Digits of $p$&$a$&$b$&$b/a$&MR tests&Elim.&Wall time\\\midrule
$9\,998$&$6\,212$&$6\,738$&$1.085$&--&--&--\\
$10\,000$&$6\,213$&$6\,740$&$1.085$&--&--&--\\
$19\,999$&$12\,228$&$13\,242$&$1.083$&$1\,050$&$87.9\%$&$2$\,h\,$40$\,min\\
$29\,998$&$19\,435$&$19\,173$&$0.987$&$286$&$87.1\%$&$3$\,h\,$06$\,min\\
\bottomrule
\end{tabular}
\end{table}

For the $29\,998$-digit prime $p=3m(m+1)+1$, $m=2^{19435}3^{19173}-1$ ($14\,999$ digits):
\[
p-1=F\cdot m,\quad F=2^{19435}3^{19174}\ (14\,999\text{ digits}),\quad F>\sqrt p,
\]
\[
w_2=5:\ 5^{p-1}\equiv1,\ \gcd(5^{(p-1)/2}-1,p)=1;\qquad
w_3=7:\ 7^{p-1}\equiv1,\ \gcd(7^{(p-1)/3}-1,p)=1.
\]
The certificate was independently re-verified in a separate environment (exact \texttt{pow},
single core); the four conditions hold and $2\log_2F=99650.142>\log_2p=99648.557$ (margin
$\approx1.58$ bit), so $p$ is unconditionally prime.

\begin{table}[h]
\centering
\caption{Filter statistics for the $29\,998$-digit record ($2\,221$ pairs tested).}
\label{tab:filter}
\begin{tabular}{lrr}
\toprule
Stage&Pairs eliminated&Fraction\\\midrule
mod-$7$ filter (Thm~\ref{thm:mod7})&$740$&$33.3\%$\\
Sieve (Cor.~\ref{cor:sieve})&$1\,195$&$53.8\%$\\
Miller--Rabin (composites)&$285$&$12.8\%$\\\midrule
Total eliminated&$2\,220$&$87.1\%$\\
Prime found&$1$&$0.045\%$\\
\bottomrule
\end{tabular}
\end{table}

\begin{remark}
The sieve fraction ($53.8\%$ above) depends on the trial-division bound $L$: a larger $L$
removes more composites before Miller--Rabin. The figure $\approx87\%$ for the two a-priori
filters is therefore relative to the $L$ used in this run, not an absolute constant. The mod-$7$
fraction ($33.3\%$, Theorem~\ref{thm:mod7}) is exact and $L$-independent.
\end{remark}

\section{Performance and scaling}\label{sec:perf}

The dominant cost is Miller--Rabin: one round on a $D$-digit modulus costs
$O(D^2\log D\log\log D)$ bit operations via FFT multiplication \cite{GMP}. Empirically, wall time
per MR event scales as $\tau\propto D^{2.5}$ over the tested range, consistent with the FFT
complexity; with only two distinct $D$ (20k and 30k) no fit could discriminate exponents in
$[2.3,2.7]$. Extrapolating $D^{2.5}$ to $D=50\,000$ projects $6$--$10$ hours on the same
hardware; this is an extrapolation, not a measurement.

\section{Open problems}\label{sec:open}

\begin{enumerate}[label=\textbf{OP\,\arabic*.},leftmargin=2.2em]
\item \textbf{Generalised filters.} For primes $\ell\equiv1\pmod3$, $\ell\in\{13,19,31,\dots\}$,
derive $\mathcal F_\ell=\{(a\bmod\mathrm{ord}_\ell2,\,b\bmod\mathrm{ord}_\ell3):\ell\mid p\}$.
For $\ell=13$ the period is $36$ and the filter removes a further $\approx2/13$ of candidates;
combining several small $\ell$ may push pre-MR elimination above $90\%$.
\item \textbf{Witnesses for general $k$.} For the broader family $p=km(m+1)+\varepsilon$, identify
the residue conditions under which a small fixed witness certifies the relevant factor, extending
Theorems~\ref{thm:w2}--\ref{thm:w3} beyond $k=3$.
\end{enumerate}

\noindent\emph{(The earlier open problem on the universality of $w_2=5,w_3=7$ is now
resolved, negatively, by Theorems~\ref{thm:w2}--\ref{thm:w3}; and the question of the cubic
character of $7$ for the full family $p=3m^2+3m+1$ is resolved by
Proposition~\ref{prop:cubicgen}.)}

\section{Conclusion}

For the $3$-smooth subfamily $p=3m(m+1)+1$, $m=2^{a}3^{b}-1$, the Pocklington--Lehmer certificate
is unconditional and minimal, and --- the main new point --- the two required witnesses are
\emph{deterministically characterised}: $w_2=5$ is valid iff $a-b\equiv1,2\pmod4$
(Theorem~\ref{thm:w2}), and $w_3=7$ iff $m\not\equiv2\pmod7$ (Theorem~\ref{thm:w3}), the latter
via cubic reciprocity through the explicit Eisenstein factorisation
$p=(m+1)^3-m^3$. Together with three elementary filters removing $\approx87\%$ of candidates,
this yields a provably correct, low-overhead certification pipeline, demonstrated by certificates
up to $29\,998$ digits on commodity hardware. The contribution is structural and explicit rather
than record-breaking: the value lies in identifying a family where both the certificate and its
witnesses become exact arithmetic statements.

\section*{Acknowledgements}
The author thanks the developers of \texttt{gmpy2} and GMP. This work was carried out with the
assistance of a large language model (Claude, Anthropic) acting as a research collaborator; all
quantitative claims were re-verified by exact integer arithmetic (Appendix), and all proofs rely
only on classical results (quadratic and cubic reciprocity, Pocklington--Lehmer).

\appendix
\section{Verification by exact arithmetic}

The witness theorems were checked against direct computation over all primes of the family with
$1\le a,b\le129$ ($276$ primes; $0$ exceptions).

\begin{footnotesize}
\begin{verbatim}
from sympy import isprime
INVALID_w3 = {(0,1),(1,5),(2,3)}
vA = vB = n = 0
for a in range(1, 130):
    for b in range(1, 130):
        m = (1<<a)*pow(3,b) - 1
        p = 3*m*(m+1) + 1
        if not isprime(p): continue
        n += 1
        w2_valid = pow(5,(p-1)//2,p) != 1        # 5 is a quadratic non-residue
        w3_valid = pow(7,(p-1)//3,p) != 1        # 7 is a cubic non-residue
        if w2_valid != (((a-b) % 4) in (1,2)):           vA += 1   # Theorem 4.1
        predB = (m % 7 != 2) and ((a%3, b%6) not in INVALID_w3)
        if w3_valid != predB:                            vB += 1   # Theorem 4.5
print(n, vA, vB)        # -> 276 0 0
\end{verbatim}
\end{footnotesize}

Proposition~\ref{prop:cubicgen} (general cubic character of $7$) was checked over all
$p=3m^2+3m+1$ prime with $m<9000$ ($1797$ primes; $0$ exceptions):

\begin{footnotesize}
\begin{verbatim}
from sympy import isprime
RES21 = {0,2,10,18,20}                  # Proposition 4.7: residue classes mod 21
viol = n = 0
for m in range(2, 9000):
    p = 3*m*m + 3*m + 1
    if not isprime(p) or p == 7: continue
    n += 1
    if (pow(7,(p-1)//3,p)==1) != (m % 21 in RES21): viol += 1
print(n, viol)         # -> 1797 0
\end{verbatim}
\end{footnotesize}

The $29\,998$-digit certificate is re-verified by four modular exponentiations:
\begin{footnotesize}
\begin{verbatim}
a, b = 19435, 19173
m = 2**a * 3**b - 1;  p = 3*m*(m+1) + 1;  F = 2**a * 3**(b+1)
assert F*F > p
assert pow(5,p-1,p)==1 and __import__('math').gcd(pow(5,(p-1)//2,p)-1,p)==1
assert pow(7,p-1,p)==1 and __import__('math').gcd(pow(7,(p-1)//3,p)-1,p)==1
# => p is unconditionally prime
\end{verbatim}
\end{footnotesize}


\begin{thebibliography}{9}
\bibitem{series} H.~Bakkaoui, \emph{A parametric family of primes
$p=km(m+1)+\varepsilon+2kq$: heuristic laws, conditional theorems, and unconditional primality
certificates}, arXiv:2606.16189 [math.NT] (2026),
\url{https://doi.org/10.48550/arXiv.2606.16189}.
\bibitem{AKS} M.~Agrawal, N.~Kayal, N.~Saxena, \emph{PRIMES is in P}, Ann. of Math. \textbf{160}
(2004), 781--793.
\bibitem{GMP} T.~Granlund and the GMP team, \emph{GNU Multiple Precision Arithmetic Library},
v6.3.0, 2023.
\bibitem{BLS} J.~Brillhart, D.~H.~Lehmer, J.~L.~Selfridge, \emph{New primality criteria and
factorizations of $2^m\pm1$}, Math. Comp. \textbf{29} (1975), 620--647.
\bibitem{Pocklington} H.~C.~Pocklington, \emph{The determination of the prime or composite nature
of large numbers by Fermat's theorem}, Proc. Cambridge Philos. Soc. \textbf{18} (1914--16),
29--30.
\bibitem{IR} K.~Ireland, M.~Rosen, \emph{A Classical Introduction to Modern Number Theory}, 2nd
ed., GTM~84, Springer, 1990.
\end{thebibliography}
\end{document}